\newtheorem{thm}{Theorem}[section]
\newtheorem{lem}[thm]{Lemma}
\newtheorem{prop}[thm]{Proposition}
\theoremstyle{remark}
\newtheorem{rem}[thm]{Remark}
\newtheorem*{rem*}{Remark}
\theoremstyle{definition}
\newtheorem{dfn}[thm]{Definition}
\newtheorem{ex}[thm]{Example}
\numberwithin{equation}{section}
\newcommand{\Rz}{\mathbb{R}}
\newcommand{\M}{\mathcal{M}}
\begin{document}
\title[The Kuratowski convergence of medial axes\dots]{The Kuratowski convergence of medial axes and conflict sets}

\author{Adam Białożyt}
\address{Jagiellonian University, Faculty of Mathematics and Computer Science, Institute of Mathematics, \L ojasiewicza 6, 30-348 Krak\'ow, Poland, ORCID 0000-0001-8830-935X}\email{adam.bialozyt@uj.edu.pl}

\author{Anna Denkowska}
\address{Cracow University of Economics, Department of Mathematics, Rako\-wicka 27, 31-510 Krak\'ow, Poland, ORCID 0000-0003-4308-8180}
\email{anna.denkowska@uek.krakow.pl}

\author{Maciej P. Denkowski}
\address{Jagiellonian University, Faculty of Mathematics and Computer Science, Institute of Mathematics, \L ojasiewicza 6, 30-348 Krak\'ow, Poland, ORCID 0000-0001-7231-2482 (Corresponding author)}\email{maciej.denkowski@uj.edu.pl}

\keywords{Medial axis, skeleton, central set, conflict sets, o-minimal geometry, set convergence}
\subjclass{32B20, 54F99}
\date{February 24th 2015, revised: February 12th 2016, November 2nd 2016, May 11th 2020, October 22nd 2022}

\begin{abstract}
This paper consists of two parts. In the first one we study the behaviour of medial axes (skeletons) of  closed sets in a connected complete Riemannian manifold $\M$ under deformations. The second one is devoted to a similar study of conflict sets. We apply a new approach to the deformation process. Instead of seeing it as a `jump' from the initial to the final state, we perceive it as a continuous process, expressed using the Kuratowski convergence of sets (hence, unlike other authors, we do not require any regularity of the deformation). Our main `medial axis inner semi-continuity' result has already proved useful, as it was used to compute the tangent cone of the medial axis with application in singularity theory.
\end{abstract}
\maketitle


\section{Introduction}

It has been known for a long time that the \textit{medial axis} (\footnote{As explained later, what we mean by \textit{medial axis} of a given closed, nonempty set $X\subset \M$ is the set of points in $\M $ for which there is more than one distance realising geodesic to $X$ with respect to the induced distance.}) is highly unstable under small deformations. In particular, F. Chazal and R.~Soufflet gave in \cite{ChS} a simple illustration of this fact: the medial axis of a planar circle is its central point, but even the smallest `protuberance' on the circle leads to the medial axis becoming a whole segment. Their paper \cite{ChS} is entirely devoted to showing that under some  hypotheses on $X$ there is a kind of stability of the medial axis under $\mathcal{C}^2$ deformations. Their approach consists in  looking at the initial and the final steps only --- with nothing in between, so to say.

In the present paper we adopt another, natural, point of view: we see the deformation as a continuous process that we do not even require to be smooth. This lets us have some insight of what is happening to the medial axis. 

We have chosen the Kuratowski (or \textit{Painlev\'e-Kuratowski}) convergence of closed sets as a method of approach to this process of deformation. In this way we have obtained our first main Theorem \ref{main} which has already proved useful: it is used in \cite{BD}  in order to compute the tangent cone of the medial axis of a definable or subanalytic set, which permitted to give some relation between the medial axis and the type of singularity of the given definable set (see Example \ref{bd}).

After settling the question of the behaviour of the medial axis, we turn to studying the evolution of the conflict set of a family of sets, obtaining a similar semi-continuity result 
as well as a convergence statement, see Theorem \ref{CS}, the second main result of the paper.

\medskip
The paper is divided into two distinct parts and organized as follows. The first four sections are entirely devoted to medial axes. After recalling the basic definitions (Section \ref{2}), we give a large range of examples to illustrate the obstacles encountered by mathematicians, us included, who believed that the problem of convergence of the medial axes was an easy one (Section \ref{3}).  The examples lead to two natural conjectures that turn out to be false (as shown in Examples \ref{3.5} and \ref{3.11}, respectively). We do this to show why our approach via the Kuratowski convergence seems to be the most appropriated and the result obtained in Theorem \ref{main} is somehow optimal (cf. Remark \ref{Rk} and Example \ref{3.11}). Although natural, it has eluded to specialists who had judged it at first elementary and easy. We have also been through this experience and it is our way of sharing it with the reader. 

The last section concerns conflict sets. It contains the basic definitions, a brief discussion of the relation between conflict sets and medial axes (as well as Voronoi diagrams), some auxiliary results and finally the second main Theorem \ref{CS}.

\medskip
As a matter of fact, the first versions of the paper had dealt exclusively with definable (semi-algebraic, subanalytic) sets in ${\Rz}^n$ as it seemed that we would definitely need some control over the topology involved. However, thanks to the endeavours of the first-named author, it turned out that the result does not require any special `tame geometry' assumptions. As some of the auxiliary `definable' results may be of interest on their own, we gather them in the appendix.

\section{Acknowledgements}
The authors would like to express their gratitude to Lev Birbrair for his interest in the paper and many discussions on the subject.

During the preparation of the medial axis part of this paper the third-named author was partly supported by Polish Ministry of Science and Higher Education grant 1095/MOB/2013/0. He would like to thank the University of Lille 1 for hospitality.

\section{Preliminaries}\label{2}
Throughout the  paper we study the behaviour of the medial axes of a family of closed subsets of a {\it connected complete Riemannian manifold} $\M$. The medial axis is closely related to the notion of \textit{central set} (the set of centres of maximal balls contained in $\M \setminus X$) and cut locus, and appears sometimes under the name of skeleton or cut locus (although this need not denote precisely the same concept). It plays an important role in pattern recognition (see \cite{ChS} for references), but has applications also in variational analysis (historically \cite{Cl} seems to be the first paper hinting at that, see also \cite{F}) or singularity theory which is particularly of interest to us (cf. \cite{Y}, \cite{BS}, \cite{D}, \cite{BD}).

\subsection{Medial axis and central set}
We recall the basic definitions. For a given pair of points $p,q$ on a Riemannian manifold $(\M,g)$, using the metric tensor $g$, we can define the distance $d(p,q)$ between them as the infimum of lengths of all piecewise differentiable curves on $\M$ connecting $p$ to $q$ (\footnote{Mind that in case $(\M,g)=(\Rz^n,\mathbb{I})$, the process reconstructs the usual Euclidean distance.}). Then for a closed, nonempty set $X\subset \M$ and a point $p\in \M$ we shall write
$$ \mathrm{dist}(p,X):=\inf\{d(p,q)\mid q\in X\}.$$
We define the set of closest points to $p\in \M$ as 
$$
m(p):=\{x\in X\mid d(p,x)=\mathrm{dist}(p,X)\}
$$
which is a compact nonempty subset of $X$. Since we assume $\M$ to be complete, thanks to Hopf-Rinow Theorem any point of $x\in m(p)$ can be connected with $p$ via a geodesic $\gamma_{x,p}$ of length $\mathrm{dist}(p,X)$ originating from the point $x$. We shall write $\Omega_{X,p}$ for the set of geodesics of minimal length connecting a point in $m(p)$ with $p$ and $\gamma_{X,p}$ for an arbitrary geodesic in $\Omega_{X,p}$. Then the \textit{medial axis of $X$} is defined to be the set 
$$
M_X:=\{p\in \M \mid \exists \gamma,\tilde{\gamma}\in \Omega_{X,p}: \gamma\neq \tilde{\gamma} \}
$$
i.e. the set of points where the distance realising geodesic is not unique. Observe here, that in the Euclidean case it is equivalent to the multifunction $x\mapsto m(x)$ not being univalent at the given point. Thus, $M_X$ is a strict generalisation of Blum's medial axis to $\M$. A simple example of a point on a sphere together with its antipodal shows however, that the notion is more intricate in the Riemannian setting. 

Recall that a point on a geodesic $\gamma$ emanating from $q\in\M$ is called a \textit{ cut point of $q$ along $\gamma$} if it is the first point $\gamma(t_0)$ such that for all points $\gamma(t)$ with $t>t_0$ there exists a geodesic from $\gamma(t)$ to $q$ shorter than $\gamma$. The collection of cut points of $q$ along all possible geodesics is denoted by $Cut(q)$. It is an important notion in global analysis of Riemannian manifolds as it gives a kind of skeleton on which the manifold is spanned and inherits a number of its topological properties. Readers interested in learning more about the cut loci and their relation with conjugate points may consult the splendid survey of Shoshichi Kobayashi \cite{K}.

We can adapt the definition of the cut locus of a point to closed sets $X$ by simply considering the geodesics originating from points of $X$ and collecting the points after which the said geodesics cease to minimise the distance to $X$. More precisely, we say that a point $p\in\M$ belongs to the cut locus of a closed set $X\subset \M$, and denote this by $p\in Cut(X)$, if there exists a point $q\in X$ and a geodesic $\gamma$ originating from $q$ such that $p$ is the first point $\gamma(t_0)$ such that for all points $\gamma(t)$ with $t>t_0$ there exists a geodesic from $\gamma(t)$ to $X$ shorter than $\gamma$. The notion of cut locus of a closed set has become an object of recent interest, notably due to \cite{A} and \cite{TS}. We note here also that by the results of \cite{A} we have
$$ M_X\subset Cut(X)\subset \overline{M_X}$$
Indeed if a point $p$ admits at least two distinct minimising geodesics $\gamma,\tilde{\gamma} \in \Omega_{X,p}$ then neither of them is minimising beyond $p$ which proves the first inclusion. The second inclusion is a straightforward consequence of \cite[Theorem 1.4]{A}.


\subsection{Kuratowski convergence}

For set convergence we refer the reader the excellent book \cite{RW}. Here we adopt the point of view of \cite{DD} (slightly generalised). Hereafter, $\mathbb{B}(p,r)$ denotes the open ball of radius $r>0$ centred at $p$.

In what follows we will consider a set $X\subset \Pi\times\M$ in the variables $(t,x)$ where $\Pi$ is considered to be a $T_1$ topological space of parameters with a distinguished non-isolated point $0$ having a countable basis of neighbourhoods. We shall write $X_t$ for the $t$-\textit{sections} $X_t=\{x\in\M\mid (t,x)\in X\}$. Consider the projection $\pi(t,x)=t$ and assume that $0\in\overline{\pi(X)\setminus\{0\}}$. We define the \textit{upper} and the \textit{lower Kuratowski limits} of $X_t$ for $\pi(X)\setminus\{0\}\ni t\to 0$ as in \cite{DD}:\begin{itemize}
\item $x\in\limsup X_t$ iff for any neighbourhoods $U\ni x, V\ni 0$, there is a parameter $t\in\pi(X)\cap V\setminus\{0\}$ such that $X_t\cap U\neq\varnothing$;
\item $x\in\liminf X_t$ iff for any neighbourhood $U\ni x$, there is a neighbourhood $V\ni 0$ such that for any $t\in\pi(X)\cap V\setminus\{0\}$, we have $X_t\cap U\neq\varnothing$.
\end{itemize}
As $\liminf X_t\subset \limsup X_t$, we have convergence iff the converse inclusion holds. In particular, $X_t$ converges to $X_0$ when $t\to 0$, which we denote by $X_t\stackrel{K}{\longrightarrow} X_0$, iff $$\limsup X_t\subset X_0\subset\liminf X_t.$$ 

Recall that the upper and lower limits are always closed sets and do not change if we compute them for the closures $\overline{X_t}$.

It is worth noting that $X_0=\limsup X_t$ iff for any compact set $K$ disjoint with $X_0$, there is a neighbourhood $V$ of $0\in\Pi$ such that for all $t\in V$, $X_t\cap K=\varnothing$ (see e.g. \cite{DD}). 

The next general lemma can be found in \cite{DK}; here we give a slightly different and more straightforward proof.
\begin{lem}[cf. \cite{DK} Lemma 2.1]\label{ciqglosc}
Assume that $Y\subset\Pi\times\M$ has closed $t$-sections, $0\in\overline{\pi(Y)\setminus\{0\}}$ for $\pi(t,x)=t$ and $Y_t\stackrel{K}{\rightarrow} Y_0$. Then for any $y\in\M$, $d(x,Y_t)\to d(y,Y_0)$, when $\pi(Y)\times \M\ni (t,x)\to (0,y)$.
\end{lem}
\begin{proof}
Fix $y\in\M$ and put $d:=d(y,Y_0)$. There are three possibilities.

(1) $d=+\infty$ i.e. $Y_0=\varnothing$. From the definition of the convergence it follows directly that for any $R>0$, there is $Y_t\subset\M\setminus\mathbb{B}(y,R)$, for all $t$ sufficiently close to the 0. Therefore,  $d(x,Y_t)\to +\infty=d(y,Y_0)$, as required.

(2) $0<d<+\infty$. Then for any $\varepsilon>0$, $\mathbb{B}(a,d+\varepsilon)\cap Y_0\neq\varnothing$, while $\mathbb{B}(y,d-\varepsilon)\cap Y_0=\varnothing$ and by the convergence $Y_t\stackrel{K}{\rightarrow} Y_0$ these properties are shared by all the $Y_t$ for $t$ sufficiently close to $0$. Therefore, $d-\varepsilon<d(y,Y_t)<d+\varepsilon$ for $t\in\pi(Y)$ close to the 0. Recall that the distance $d(x,Y_t)$ is 1-Lipschitz, whence
$$
|d(x,Y_t)-d|\leq |d(x,Y_t)-d(y,Y_t)|+|d(y,Y_t)-d|\leq d(x,y)+\varepsilon
$$
which gives the convergence sought for.

(3) $d=0$. Then $d(x,Y_t)\leq |d(x,Y_t)-d(y,Y_t)|+d(y,Y_t)$ and the first term is bounded by $d(x,y)$ while $d(y,Y_t)\to 0$ when $t\to 0$. Indeed, since for any $\varepsilon>0$, $\mathbb{B}(y,\varepsilon)\cap Y_0\neq\varnothing$, this holds true also for $Y_t$ whenever $t$ is sufficiently close to the 0, whence $d(y,Y_t)\leq \varepsilon$. This ends the proof.
\end{proof}
\begin{rem}\label{granicecz}
It is useful to note that the Kuratowski limits $\liminf Y_t$ and $\limsup Y_t$ at a point $t_0\in\overline{\pi(Y)\setminus\{t_0\}}$ admitting a countable basis of neighbourhoods can be described also in the following convenient way:
\begin{align*}
&x\in \liminf Y_t\ \Leftrightarrow\ \forall \pi(Y)\setminus\{y_0\}\ni t_\nu\to t_0, \exists Y_{t_\nu}\ni x_\nu\to x;\\
&x\in \limsup Y_t\ \Leftrightarrow\ \exists \pi(Y)\setminus\{t_0\}\ni t_\nu\to t_0, \exists Y_{t_\nu}\ni x_\nu\to x.
\end{align*}
\end{rem}

\subsection{Some notation}

For $X\subset \Pi\times\M$ with closed $t$-sections we will use the notation 
$$
m(t,p)=\{(t,x)\in \Pi\times\M\mid (t,x)\in X\colon d(p,x)=\mathrm{dist}(p,X_t)\}
$$
and $m_t(p)=m(t,p)$. Then the medial axes $M_{X_t}$ correspond to the sections $M_t$ of the set 
$$M:=\{(t,p)\in\Pi\times\M\mid \exists \gamma,\tilde{\gamma}\in\Omega_{X_t,p}: \gamma\neq \tilde{\gamma}\}.$$
For a fixed $t$, $m_t$ is univalent apart from the set $M_t$.

Let us note the following general result (compare the statements about graphical convergence of sequences in \cite{RW}):
\begin{lem}\label{geral}
Assume that $Y\subset\Pi\times\M$ has closed $t$-sections, $0\in\overline{\pi(Y)\setminus\{0\}}$ for $\pi(t,x)=t$ and $Y_t\stackrel{K}{\rightarrow} Y_0$ with $Y_0\neq\varnothing$. Then for $m_t(p)=\{x\in Y_t\mid d(p,x)=d(p,Y_t)\}$, $\limsup_{\pi(Y)\ni t\to 0} m_t(p)\subset m_0(p)$, $p\in \M$. Moreover if $g=\lim \gamma_{Y_t,p}$ is a geodesic then it realises the distance $dist(p,Y_0)$. 
\end{lem}
\begin{proof}
By Remark \ref{granicecz}, given $x\in\limsup m_t(p)$, there are sequences $t_\nu\to 0$, $t_\nu\neq 0$, $m_{t_\nu}(p)\ni x_\nu\to x$. However, as $Y_0=\liminf Y_t$, we get $x\in Y_0$ and $d(x_\nu,p)=d(a,Y_{t_\nu})\to d(a,Y_0)$, by Lemma \ref{ciqglosc}, whence $x\in m_0(p)$. 

For the second part observe firstly that since $g=\lim \gamma_{Y_t,p}$ we obviously have $g(0)=x\in Y_0$ and $g(dist(a,Y_0))=a$. Now since the distance $dist(a,Y_0)$ agrees with the length of $g$, $g$ is distance realising. 
\end{proof}

\section{Introductory examples}\label{3}

In this section we restrict ourselves to the simplest and regular case of the Euclidean space and semi-algebraic sets.

Consider a closed, nonempty set $X\subset{\Rz}^k\times{\Rz}^n$ in the variables $(t,x)$. We always assume that $t=0$ is an accumulation point of $\pi(X)$.

If we put $\delta_t(x)=\mathrm{dist}(x,X_t)^2$, then we know that  \begin{enumerate}
\item $\delta_t$ is locally Lipschitz, vanishing exactly on $X_t$;
\item the non-differentiability points of $\delta_t$ coincide with $M_t$ (see \cite{Y}, \cite{BD}, for Riemannian case see \cite{A}, cite{TS});
\item $(t,x)\mapsto \delta_t(x)$ is continuous provided the sections $X_t$ vary continuously (see \cite{Dmfct} and Lemma \ref{ciqglosc}).
\end{enumerate}

Of course, there is no relation whatsoever between the medial axis of $X$ and the medial axes of the sections. However the graph $\Gamma_{m_t}$ of $m_t$ is still the $t$-section of $\Gamma_m$. 

Since we assume here $X$ to be closed, we have $\limsup X_t\subset X_{0}$ (see \cite{DD}). Note that $M_{0}$ may be empty while $\limsup M_t$ is not:
\begin{ex}
Let $X=\{(t,x,y)\in{\Rz}^3\mid x^2+y^2=t^2\}$. Then the circles $X_t\stackrel{K}{\longrightarrow} X_0=\{(0,0)\}$, but $M_t=\{(0,0)\}$ for $t\neq 0$, whereas $M_0=\varnothing$. 
\end{ex}

Note that in the Example above the dimension of $X_t$ is not preserved at the limit. Let us have a look at an Example of constant dimension:
\begin{ex}
Consider $X=\{(t,x,y)\in{\Rz}^3\mid t^2y=x^2-1\}$. Then for $t\neq 0$ each $X_t$ is a parabola $y_t(x)=(1/t^2)(x^2-1)$, whereas $X_0=\{-1,1\}\times{\Rz}$. Clearly, $X_t\stackrel{K}{\longrightarrow} X_0$. 

It is easy to see that for any parabola $X_t$, $M_t=\{0\}\times(f_t,+\infty)$ (\footnote{Note \textit{en passant} that these sets are not closed in this example.}) where $(0,f_t)=(0,y_t(0))+\frac{1}{\kappa(0,y_t(0))}\nu(0,y_t(0))$ is the focal point of $X_t$, i.e. $\kappa$ denotes the curvature and $\nu$ the unit normal which at $(0,y_t(0))$ is just $(0,1)$. We compute the curvature as $$\kappa_t(0)=\frac{y_t''(0)}{(1+y_t'(0)^2)^{3/2}}=\frac{2}{t^2}$$ and so $f_t=\frac{t^2}{2}-\frac{1}{t^2}\to -\infty$ as $t\to 0$, i.e. (\footnote{We use the following known fact for plane curves, see \cite{Th}: if $\Gamma\subset{\Rz}^2$ is a $\mathcal{C}^2$-smooth plane curve, and $x$ lies on the normal to $\Gamma$ at $a$, then $a$ is the unique point realizing $d(x,\Gamma)$ iff the segment $[a,x]$ does not contain focal points; in particular, if $||x-a||<1/\kappa(a)$ where $\kappa(a)$ is the curvature of $\Gamma$ at $a$.}) $M_t\stackrel{K}{\longrightarrow} \{0\}\times {\Rz}=M_0$.
\end{ex}

Unfortunately, the constancy of the dimension does not always guarantee the continuity of the medial axes:
\begin{ex}
Let $X=\{(t,x,y)\in{\Rz}^3\mid t^2y=x^2\}\setminus\{(0,0,y)\mid y<0\}$. It is a closed semi-algebraic set with continuously varying $t$-sections: $X_t$ is the parabola $y_t(x)=(1/t^2)x^2$ for $t\neq 0$ and the semi-line $\{0\}\times [0,+\infty)$ for $t=0$. For each parabola we have $\kappa(0)=2/t^2$ and so $$M_t=\{0\}\times(t^2/2,+\infty)\stackrel{K}{\longrightarrow}\{0\}\times[0,+\infty)\supsetneq M_0=\varnothing.$$
\end{ex}
\begin{rem}\label{glowny przyklad}
In the last example $X_0$ has a singularity, whereas the nearby fibres $X_t$ are smooth. It is important to observe that even in the definable setting, the smoothness of the limit does not necessarily imply the smoothness of the nearby sections (\footnote{In this case we also have $M_t$ a constant family for $t\neq 0$, but $M_0=\varnothing$.}):
$$
X_t=\{(x,y)\in{\Rz}^2\mid y=t|x|\}\stackrel{K}{\longrightarrow} \{(x,0)\mid x\in{\Rz}\}=X_0,\quad (t\to 0).
$$
Neither does the smoothness of the sections $X_t$, together with connectedness and constancy of dimension guarantee the smoothness of the limit:
\begin{align*}
X_t &=\{(x,t)\in{\Rz}^2\mid xy=t^2, x,y\geq 0\}\stackrel{K}{\longrightarrow}\\
\stackrel{K}{\longrightarrow}& \{(x,y)\in{\Rz}^2\mid xy=0, x,y\geq 0\}=X_0,\quad (t\to 0).
\end{align*}

These two examples are particularly interesting, since we have in both cases the best possible (from a set-theoretic point of view) situation: the sets in question are graphs converging locally uniformly (\footnote{In particular, the convergence is without multiplicities, i.e. to each branch corresponds one branch in the converging sets. Note also that $v_\nu(u)=\sqrt{u^2+(1/\nu)}$  is an example of a sequence of 1-Lipschitz $\mathcal{C}^1$ functions converging locally uniformly to a non-differentiable function.}). 

Indeed, in the second case, after the obvious change of variables $u=(x-y)/2$, $v=(x+y)/2$, we have $X_t$ described by $v^2-u^2=t^2$ in $\{(u,v)\mid v\geq |u|\}$. Thus, $X_t$ is the (analytic) graph of the function $v(u)=\sqrt{u^2+t^2}$. From its symmetry we infer, computing as earlier the curvature at the origin, that $M_t=\{0\}\times (2|t|,+\infty)$. So that $M_t\stackrel{K}{\longrightarrow} \{0\}\times [0,+\infty)=\overline{M_0}$.
\end{rem}

These examples lead to the following first natural conjecture. 

\medskip
\noindent\textbf{Conjecture 1.} %
{\it Assume that there is $X_0=\limsup X_t$ for the definable set $X$, 
then $\limsup M_t\supset M_{0}$.
}


\smallskip
However, it turns out immediately that Conjecture 1 is false:
\begin{ex}\label{3.5}
Consider the closed, semi-algebraic set $X\subset{\Rz}\times{\Rz}^2$ defined by the sections $X_0:=\{(x,y)\in{\Rz}^2\mid y=|x|\}$ and $X_t=\{(x,y)\in{\Rz}^2\mid y=\mathrm{sgn}(t)x, \mathrm{sgn}(t)x\geq 0\}$ for $t\neq 0$. Then $X_0=\limsup X_t$, but $\liminf X_t=\{(0,0)\}$ so that there is no convergence. We have  $M_0=\{0\}\times(0,+\infty)$, whereas for $t\neq 0$, $M_t=\varnothing$. 
\end{ex}

We modify the conjecture:

\smallskip
\noindent\textbf{Conjecture 2.} {\it If $X_t$ have a limit at $0$ (not necessarily coinciding with $X_0$, but at least included in it), then there exists also $\lim M_t$ and it contains $M_{0}$.}

\smallskip
\begin{rem}
Note (cf. Example \ref{cg} hereafter) that the convergence of $M_t$ is to be considered over $\pi(M)$.
\end{rem}

If $X_t$ does not converge, then in general neither $M_t$ does:
\begin{ex}
Let $X\subset{\Rz}\times{\Rz}^2$ be the closed semi-algebraic set defined by $X_0=\{(x,y)\in{\Rz}^2\mid x^2=y^2\}$ and for $t\neq 0$, $X_t=\{(x,y)\in{\Rz}^2\mid y=\mathrm{sgn}(t)|x|\}$. Then $X_0=\limsup X_t$, $\liminf X_t=\{(0,0)\}$ so that there is no convergence. Now, $M_t=\{0\}\times (0,+\infty)$  or $\{0\}\times (-\infty,0)$ according to the sign of $t\neq 0$. Therefore, 
\begin{align*}
\liminf M_t &=\{(0,0)\}\subsetneq \limsup M_t=\\ 
&=\{0\}\times{\Rz}\subsetneq \overline{M_0}=\{(x,y)\in{\Rz}^2\mid xy=0\}.
\end{align*}
\end{ex}

Observe that by the results of \cite{DD} we may assume that $X$ has continuously varying sections (we lose, however, the closedness of $X$, since this assumption requires getting rid of a nowheredense subset of $\pi(X)$). 
\begin{ex}\label{cg}
Note that the assumptions:\\ \centerline{$X$ closed and definable, $X_0=\lim X_t$,} \\ do not imply necessarily that the nearby sections are continuous. To see this consider two examples. 

The first one is the subanalytic set $X=({\Rz}\times\{0\})\cup\bigcup_{n=1}^{+\infty}\{(1/n,n)\}$. Of course this is not definable. 

The second one is a general semi-algebraic example but with two-dimensional parameters: $X=({\Rz}^2\times\{0\})\cup\{(x,0,x)\mid x\in{\Rz}\}$.  By \cite{DD}, apart from a nowheredense set in the parameters, all the sections are continuous. Note that in this example we have to throw away exactly those parameters over which the $M_t$'s are non-void.
\end{ex}

\begin{ex}
Consider $X$ given by 
$$
X_t=\begin{cases}
\mathbb{S}^1\cup\{(0,0)\}, &t>0;\\
\mathbb{S}^1\cup (1/2)\mathbb{S}^1, &t<0;\\
\mathbb{S}^1\cup (1/2)\mathbb{S}^1\cup\{(0,0)\}, &t=0.
\end{cases}
$$
This is a closed semi-algebraic set with $X_0=\limsup X_t$ and $\liminf X_t=\mathbb{S}^1$. Here 
$$
M_t=\begin{cases}
(1/2)\mathbb{S}^1, &t>0;\\
(3/4)\mathbb{S}^1\cup\{(0,0)\}, &t<0;\\
(3/4)\mathbb{S}^1\cup(1/4)\mathbb{S}^1, &t=0
\end{cases}
$$
and so $\liminf M_t=\varnothing$, $\limsup M_t=(1/2)\mathbb{S}^1\cup(3/4)\mathbb{S}^1\cup\{(0,0)\}$ which shows that there is no relation whatsoever with $M_0$.
\end{ex}

One final Example to show that in Conjecture 2 the limit $\lim M_t$ depends on how the sets $X_t$ converge rather than on the limit $X_0$:
\begin{ex}
Let $X_0={\Rz}\times\{0\}$, and consider $$X_1=((-\infty,-1]\times\{0\})\cup\{(x,-\mathrm{sgn}(x)x+1)\mid |x|\leq 1\}\cup ([1,+\infty)\times\{0\}).$$
The two simplest ways of making $X_1$ continuously evolve to $X_0$ are the following: we define for $t\in (0,1)$, the sets $X_t$ either as
$$
((-\infty,-(1/t)]\times\{0\})\cup\{(x,-t^2\mathrm{sgn}(x)x+t)\mid |x|\leq 1\}\cup ([(1/t),+\infty)\times\{0\})
$$
or as
$$
((-\infty,-1]\times\{0\})\cup\{(x,-t\mathrm{sgn}(x)x+t)\mid |x|\leq 1\}\cup ([1,+\infty)\times\{0\}).
$$
In both cases the sets $M_t$ converge and the limit contains $\{0\}\times(-\infty,0]$ (actually, it reduces to it in the first case), but in the second case it contains also $\{-1,1\}\times [0,+\infty)$.
\end{ex}
Now we give a semi-algebraic counter-example to Conjecture 2:
\begin{ex}\label{3.11}
Consider the set $X=\{(t,x,y)\in{\Rz}\times{\Rz}^2\mid y=t|x|\}$ from Remark \ref{glowny przyklad}. It is semi-algebraic, we have $X_t\stackrel{K}{\longrightarrow} X_0$, but $$M_t=\begin{cases}
\{(x,y)\mid x=0, y>0\}, &t>0,\\
\varnothing, & t=0,\\
\{(x,y)\mid x=0, y<0\}, &t<0,
\end{cases}
$$
so that there is no convergence.
\end{ex}
Observe that in all the previous examples with converging sections, we had $\liminf M_t\supset M_0$. This remark leads to the main theorem presented in the next section.

\section{Inner semi-continuity of medial axes}
\begin{thm}\label{main}
Assume that $X\subset \Pi\times\M$ has closed $t$-sections and we have the convergence $X_t\stackrel{K}{\longrightarrow} X_0$. Then for $M=\{(t,x)\in\Pi\times \M\mid \exists \gamma_{X_t,p},\tilde{\gamma}_{X_t,p}\in\Omega_{X_t,p}: \gamma_{X_t,p}\neq \tilde{\gamma}_{X_t,p}\}$, we have $$\liminf_{\pi(M)\ni t\to 0} M_t\supset  M_0$$
where we posit $\liminf M_t=\varnothing$ when $0\notin \overline{\pi(M)\setminus\{0\}}$.
\end{thm}
\begin{rem}
The Theorem implies that $0$ cannot be an isolated point of $\pi(M)=\{t\mid M_t\neq\varnothing\}$, i.e. $M_0=\varnothing$, if $0\notin\overline{\pi(M)\setminus\{0\}}$.
\end{rem}
\begin{rem}\label{Rk}
Example \ref{3.11} shows that we can hardly expect a better result even in the quite regular situation when we are dealing with a convergent semi-algebraic one-parameter family of graphs.
\end{rem}
\begin{rem}
The result presented in Theorem \ref{main} is what is called {\it inner semi-continuity} at the origin of the multifunction $t\mapsto M_t$.
\end{rem}
\begin{ex}\label{bd}
Before we prove this Theorem, let us give one important application used in \cite{BD}. In the semi-algebraic (or broader  -- definable ) setting, due to the Curve Selecting Lemma, the tangent cone $C_0(X)$ to a semi-algebraic set $X\subset{\Rz}^n$ at $x=0$ is obtained as the limit
$$
C_0(X)=\lim_{t\to 0^+}(1/t) X.
$$
In particular, if we know that $C_0(X)$ has a nonempty medial axis, then we conclude using the Theorem above, not only that $M_X\neq\varnothing$ but also that $0\in\overline{M_X}$. Moreover, since it is obvious that $M_{(1/t)X}=(1/t)M_X$ and we are semi-algebraic, we obtain even the convergence of the medial axes of the dilatations and the limit is clearly $C_0(M_X)$. The Theorem above gives then $C_0(M_X)\supset M_{C_0(X)}$.
\end{ex}

\begin{proof}[Proof of Theorem \ref{main}] 
Assume that there is a point $$a\in M_0\setminus\liminf M_t.$$ There are two cases to deal with depending on whether $t=0$ is an accumulation point of $\pi(M)$, or not. We will treat them simultaneously. 

If $0\in\overline{\pi(M)\setminus\{0\}}$, this implies that for some ball $B$ centred at $a$ and for any neighbourhood $V\ni 0$, there is a point $t\in \pi(M)\cap V\setminus\{0\}$ such that $M_t\cap B=\varnothing$. In other words $t=0$ belongs to the closure of the set 
$$
E:=\{t\in\pi(M)\setminus\{0\}\mid M_t\cap B=\varnothing\}.
$$
This set with topology induced by $\Pi$ inherits the topological properties we are intrested in, namely the $T_1$ class and countable base of $0$. Hence, we can shrink the parameter space to $E$ conserving the compliance with the assumptions of the theorem. Then the set $$X':=\{(\tau,x)\in E\times \M\mid (\tau,x)\in X\}$$ has closed $t-$sections and satisfies $X'_\tau=X_{\tau}\stackrel{K}{\longrightarrow} X'_0=X_0$, when $\tau\to 0$. 

On the other hand, if $0$ is isolated in $\pi(M)$, then we may take $X'$ to be simply $X\cap 
(V\times\M)$  with $V\cap \pi(M)=\{0\}$ and we still have the convergence of $X'_\tau$ to $X_0$ together with a neighbourhood of $a$ in which there are no medial axes for parameters $t\neq 0$.

This means that we may restrict our considerations to a family of converging sets $X_t\stackrel{K}{\longrightarrow} X_0$ ($t\to 0$) accompanied by a neighbourhood $U\ni a$ such that all the $m_t$'s are univalent in $U$ for $t\neq 0$ and every point of $U$ admits exactly one distance minimising geodesic to $X$. Put $d_t(x):=\mathrm{dist}(x,X_t)$. Now, define
$$
r(t):=\sup\{s\geq 0\mid \mathbb{B}(\gamma_{X_t,a}(s),s)\cap X_t=\varnothing\},\> t\in T\backslash\{0\},
$$
i.e. we consider the open ball $\mathbb{B}(a,d_t(a))$ and start to inflate it from the point $m_t(a)$ along geodesic $\gamma_{X_t,a}$ while keeping the `tangency' point $m_t(a)$ to $X_t$ --- we do this as long as possible without meeting $X_t$. Naturally, there is $r(t)\geq d_t(a)$ for all $t$ as the open ball $\mathbb{B}(a,d_t(a))=\mathbb{B}(\gamma_{X_t,a}(d_t(a)),d_t(a))$ is always disjoint with $X_t$.

First, let us observe that by shrinking the parameter space $\Pi$ once more we can assume the geodesics segments $\gamma_{X_t,a}$ to converge to a single geodesic $\gamma_{X_0,a}$ and the function $\mu(t):=\gamma_{X_t,a}(0)$ to be continuous at the origin. Indeed the geodesics $\gamma_{X_t,a}$ are of form $\gamma_{X_t,a}(s)=exp_a((1-s)\cdot v_t)$ for an appropriate $v_t\in T_a\M$. Since $1/2<\|v\|< 2$ for $t$ close to $0$, it is easy to find a convergent sequence $v_{t_\nu}\to v_0$. It suffices then to reduce $\Pi$ again to the newly found sequence. 

Once we have established this, we observe that $a\in M_0$ implies that
$$
\sup\{s\geq 0\mid \mathbb{B}(\gamma_{y,a}(s),s)\cap X_0=\varnothing\}=d_0(a)
$$
where $y$ is the endpoint of the geodesic $\gamma_{X_0,a}$ defined earlier as the limit of $\gamma_{X_t,a}$. For, should there be $r(0)>d_0(a)+c$ for some $c>0$, then we would have  $d_0(\gamma_{y,a}(d_0(a)+c))=d(a)+c$. But at the same time, taking any geodesic $h:=\tilde{\gamma}_{X_0,a}\neq \gamma_{y,a}=:g$ and $\delta$ small enough we would get $$d_0(g(d_0(a)+c)< d(h(0),h(d_0(a)-\delta))+d(h(d_0(a)-\delta),g(d_0(a)+\delta))+$$
$$d(g(d_0(a)+\delta),g(d_0(a)+c)<d(a)+c$$

If we succeed in showing that $\lim r(t)=d_0(a)$, we are done, as it means that $\gamma_{X_t,a}(r(t))$ belongs to $Cut(X_t)$, for $t$ close to zero, where $Cut(X_t)$ denotes the cut locus of $X_t$ 
. Indeed, the points $\gamma_{X_t,a}(r(t))$ certainly belong to $Cut(X_t)$ and we know that 
$M_t\subset Cut(X_t)\subset\overline{M_t}$ \cite{A} (and the closures do not alter the limit). This gives the desired contradiction.

Why is $\lim r(t)=d_0(a)$? Observe that by definition $r(t)\geq d_t(a)$ and the function $t\to d_t(a)$ is continuous, whence $\liminf r(t)\geq d_0(a)$. Suppose that $\limsup r(t)>1+c$, for a certain $c>0$. Then by setting $$B_t:=\mathbb{B}(\gamma_{X_t,a}(d_0(a)+c),d_{t}(\gamma_{X_t,a}(d_0(a)+c)))$$ we have $B_t \cap X_{t}=\varnothing$. Thanks to Lemma \ref{ciqglosc} these balls converge to the closure of a ball $$B_0:=\mathbb{B}(\gamma_{y,a}(d_0(a)+c),d_0(\gamma_{y,a}(d_0(a)+c)).$$ 

But the latter open ball can contain no point of $X_0$, otherwise such
a point would be reachable by points from the sets $X_{t}$, due to the
convergence, which is clearly impossible. Namely, if $x_0 \in B_0 \cap X_0$, then
taking a ball $U$ centred at $x_0$ and such that $U \subset B_0$ we would get
\begin{align*}
&\partial B_t \xrightarrow{K} \partial B_0\text{ and }\partial B_0 \cap \partial U = \varnothing \Rightarrow \partial B_t \cap \partial U = \varnothing \text{, }t \text{ close to } 0,\\
& \overline{B_t} \xrightarrow{K} \overline{B_0}\text{ and }\overline{B_0} \cap U \neq \varnothing \Rightarrow \overline{B_t} \cap U \neq \varnothing \text{, }t \text{ close to } 0,
\end{align*}
whence $U \subset B_t$ for all $t$ close to $0$. But $X_{t} \to X_0$ and $x_0 \in U \cap X_0$ implies that $X_{t}  \cap U \neq \varnothing$ and so $X_{t}$ meets $B_\nu$ for all $t$ close to $0$
which contradicts the choice of $B_t$.
Therefore, $B_0 \cap X_0 = \varnothing$ which coupled with $int B_0 \supsetneq \mathbb{B}(a, d_0(a))$ contradicts $a \in M_0$. The proof is accomplished.

\end{proof}

\section{Inner semi-continuity of conflict sets in continuous families}
In this section we would like to address the question of the behaviour of the conflict set of a parametrised family of sets. Once again it is convenient to interpret the situation in terms of the sections of a set $X\subset \Pi\times \M$ with parameters $t$ in a $T_1$ topological space $\Pi$ with a distinguished non-isolated point $0$ admitting a countable base.

Before going any further let us recall some basic notions (compare \cite{BS}).
\begin{dfn}\label{cset}
We define the {\it conflict set} of a family $\mathcal{X}:=\{X_1,\dots, X_k\}$ of $k\geq 2$ closed, pairwise disjoint, nonempty subsets of $\M$ to be 
$$
\mathrm{Conf}(\mathcal{X})=\{p\in\M \mid \exists i\neq j\colon d(p,X_i)=d(p,X_j)\leq \varrho(p)\}
$$
where and $\varrho(p):=\min_{i=1}^k d(p,X_i)=d(p,\bigcup_{i=1}^k X_i)$. 
It is useful to introduce also the notion of the {\it open territory of $X_j$ (with respect to the family $\mathcal{X}$)} --- it is the open set
$$
\mathrm{Terr}(X_j,\mathcal{X}):=\{p\in\M\mid d(p,X_j)<\min_{i\neq j}d(p,X_i)\}
$$
whose closure is equal to $\overline{\mathrm{Terr}}(X_j,\mathcal{X}):=\{p\in\M\mid d(p,X_j)\leq \varrho(p)\}$. 
We call the latter the {\it closed territory of} $X_j$. 
\end{dfn}
If we are dealing with two sets, i.e. $k=2$, then their conflict sets consists of points that are equidistant to both of them. Thus a parabola is just the conflict set of a line and a point outside it in the Euclidean plane.
\begin{rem}\label{suma}
Most often we shall use the notation ${T}^j={\mathrm{Terr}}(X_j,\mathcal{X})$ and $\overline{T}^j=\overline{\mathrm{Terr}}(X_j,\mathcal{X})$, while endeavouring to avoid any confusion. It is easy to see that
$$
\mathrm{Conf}(\mathcal{X})=\bigcup_{1\leq i<j\leq k} (\overline{T}^i\cap \overline{T}^j).
$$
In particular, if the sets $X_j$ are definable in an o-minimal strucutre, then so is their conflict set. Moreover, we also have 
$$
\M\setminus\mathrm{Conf}(\mathcal{X})=\bigcup_{i=1}^k T^j
$$
and the union on the right-hand side is disjoint.
\end{rem}

The relation of the conflict set of a family to the medial axis of its union is worth noting:
\begin{lem}
If $\mathcal{X}=\{X_1,\dots, X_k\}$ is a family of closed, pairwise disjoint, nonempty subsets of $\M$ with $k\geq 2$, then 
$$
M_{\bigcup_{i=1}^k X_i}=\mathrm{Conf}(\mathcal{X})\cup\bigcup_{j=1}^k[\mathrm{Terr}(X_j,\mathcal{X})\cap M_{X_j}]
$$
all the unions appearing here being disjoint ones.
\end{lem}
\begin{proof}
Obviously $d(p,\bigcup X_i)=\varrho(p)$ which implies the inclusion `$\supset$'. On the other hand, if we pick a point $a\in M_{\bigcup X_i}$, then there are at least two distinct geodesics $g,h$ with endpoints in $\bigcup X_i$ realising the distance $d(a,\bigcup X_i)$. If their endpoints belong to two different sets, say $g(0)\in X_1, h(0)\in X_2$, then $a\in \mathrm{Conf}(\mathcal{X})$. Otherwise, if all the endpoints of minimising geodesics lie in the same set: $X_j$, then $\varrho(a)=d(a,X_j)<\min_{i\neq j} d(a,X_i)$ so that $a\in \mathrm{Terr}(X_j,\mathcal{X})$ and obviously $a\in M_{X_j}$.
\end{proof}
\begin{rem}
In the case when each of the sets $X_i=\{x_i\}$ from Definition \ref{cset} is a singleton, the conflict set is called a {\it Voronoi diagram} and it coincides with the medial axis the union $\bigcup_{i=1}^k X_i=\{x_1,\dots,x_k\}$. Therefore, in this particular case (which, however, is very important in view of all the applications of Voronoi diagrams, let it be in computer vision, economy, geography and so on) we already have the inner semi-continuity result from Theorem \ref{main} (compare \cite{Re}, \cite{Ro}).
\end{rem}

\subsection{The setting}\label{konflikt}
Our present aim is to study the behaviour of the conflict sets of a family of sets $\mathcal{X}_t$ with a multidimensional parameter $t$. Let us make this more precise. We consider $k\geq 2$ closed, pairwise disjoint sets $X^1,\dots,X^k\subset \Pi\times\M$ with the following premises:\begin{itemize}
\item  $\pi(X^1)=\ldots=\pi(X^k)$ where $\pi(t,p)=t$; 
\item  $0\in \overline{\pi(X^i)}$ but $0\notin \pi(X^i)$;
\item for each $i\in \{1,\dots,k\}$ there exists a Kuratowski limit $X_0^i:=\lim_{t\to 0} X_t^i$ (maybe empty).
\end{itemize}

Put $X:=\bigcup_{i=1}^k X^i$; it is a disjoint union. Also, $X_t\stackrel{K}{\rightarrow} \bigcup_{i=1}^k X_0^i=:X_0$. 
Write also $\tilde{\Pi}:=\pi(X^i)$ for the common projection.

\begin{rem}
In the setting introduced above, $\tilde{X}:=X\cup (\{0\}\times X_0)$ is a set with closed $t-$sections and $\tilde{X}_t\stackrel{K}{\rightarrow} \tilde{X}_0$. 

This raises the question why not to start our considerations from a set $X\subset \Pi\times \M$ with closed $t$-sections that are Kuratowski-continuous at $t=0$ and such that for any $t\in \pi(X)\setminus\{0\}$ we are given a decomposition (\footnote{I.e. we present $X_t$ as a disjoint union of nonempty sets and call $\mathcal{X}_t$ their family.}) $\mathcal{X}_t$ of $X_t$ into $k\geq 2$ (independent of $t$) pairwise disjoint, nonempty sets. Is it possible then to renumber the sets of each family $\mathcal{X}_t$ in such a way that it would yield a decomposition   of $X\setminus(\{0\}\times X_0)$ into $k$ pairwise disjoint, nonempty, definable sets $X^i$ with a common projection $\Pi$ and each having convergent $t$-sections when $t\to 0$? The following example shows this may not be possible even in the semi-algebraic case, which explains why we adopt a seemingly more na\"\i ve approach.
\end{rem}
\begin{ex}\label{AB} 

Consider a ZZ-shaped semi-algebraic set  $$X:=([-1,1]\times\{0,1\})\cup \{(x,x)\mid x\in [0,1]\}\cup \{(x-1,x)\mid x\in [0,1]\}\subset{\Rz}^2$$ together with a global decomposition $X=X^1\cup X^2$ into two disjoint semi-algebraic sets sharing the same projection $(-1,1)$, then even though $X_t\stackrel{K}{\rightarrow} X_0$, the sets $X^i$ may not have convergent sections at the origin. For instance, if $X^1=(-1,1)\times\{0\}$ and $X^2$ is the closure of $X\setminus X^1$ in $(-1,1)\times{\Rz}$, then $X^1_t\stackrel{K}{\rightarrow}X_0^1$ but $X^2_t$ does not converge at the origin. On the other hand, if $\tilde{X}^1=((-1,0]\times\{0\})\cup ([0,1)\times\{1\})\cup\{(x,x)\mid x\in [0,1)\}$ and $\tilde{X}^2$ is the closure $X\setminus \tilde{X}^1$ in $(-1,1)\times{\Rz}$, then neither of the sets has convergent sections at the origin.
\end{ex}


Let us come back to our general situation. Denote by $\tilde{\mathcal{X}}^1,\dots, \tilde{\mathcal{X}}^{k_0}$ the equivalence classes of the equivalence relation defined on  $\mathcal{X}:=\{X^1,\dots,X^k\}$ in the following way: we identify $X^i\sim X^j$, whenever either $X_0^i=X_0^j=\varnothing$, or  there are $r\geq1$ indices $i_0, i_1,\dots,i_r\in\{1,\dots,k\}$ with $i_0=i,i_r=j$ and such that $X_0^{i_s}\cap X_0^{i_{s+1}}\neq\varnothing$, for $s=0,\dots,r-1$. Then $1\leq k_0\leq k$ and we may assume that the original sets $X^i$ are numbered in such a way that the set of indices $\{1,\dots,k\}$ is divided into $k_0$ sets of the form $\{l_{j-1}+1,l_{j-1}+2,\dots, l_j\}$, for $j=1,\dots,k_0$, where $l_0=0<l_1=1<\ldots<l_{k_0}=k$, and $\tilde{\mathcal{X}}^j=\{X^{l_{j-1}+1},\dots, X^{l_j}\}$. The following notion will be useful.

\begin{dfn}
Given a family $\mathcal{Y}=\{Y^1,\dots, Y^k\}$ of $k$ pairwise disjoint, nonempty sets, we will say that the family $\tilde{\mathcal{Y}}=\{\tilde{Y}^1,\dots,\tilde{Y}^{k_0}\}$ consisting of pairwise disjoint, nonempty sets is a {\it regrouping} of $\mathcal{Y}$, if each of the sets $\tilde{Y}^j$ is the union of some of the sets $Y^i$ (\footnote{Then $k_0\leq k$.}).
\end{dfn}

Now put $\tilde{X}^j:=\bigcup\tilde{\mathcal{X}}^j$, $j=1,\dots, k_0$. This induces a decomposition $\mathcal{X}_0=\{\tilde{X}_0^1,\dots,\tilde{X}_0^{k_0}\}$ of $X_0$ satisfying $\tilde{X}_t^j\stackrel{K}{\rightarrow}\tilde{X}_0^j$, for $j=1,\dots, k_0$, and each $\tilde{X}_0^j$ is the union of the sets $X_0^i$ corresponding to the indices $i$ attached to $\tilde{\mathcal{X}}^j$. Then $\tilde{\mathcal{X}}_t:=\{\tilde{X}_t^1,\dots,\tilde{X}_t^{k_0}\}$ is a regrouping of the family $\mathcal{X}_t:=\{X_t^1,\dots,X_t^k\}$.

Let us look at a simple illustration of what happens here:
\begin{ex}\label{przyklad niezwarty}
Consider in ${\Rz}\times{\Rz}$ the semi-algebraic sets 
\begin{align*} &X^1=\{(x,1/x)\mid x\in (0,1)\},\\ &X^2=\{(x,y)\mid x\in (0,1), x/2\leq y\leq 1/2\},\\ &X^3=(0,1)\times\{3/4\},\\ &X^4=[(0,1)\times[-1/2,0]]\cup\{(x,-1/x)\mid x\in (0,1)\},\\ &X^5=\{(x,y)\mid x\in (0,1), -1\leq y\leq -(x+1)/2\}.\end{align*} 
Then $X_0^1=\varnothing$, $X_0^2=[0,1/2]$, $X_0^3=\{3/4\}$, $X^4=[-1/2,0]$, and of course $X_0^5=[-1,-1/2]$ so that we obtain three equivalence classes: $\tilde{\mathcal{X}}^1=\{X^1\}$, $\tilde{\mathcal{X}}^2=\{X^2,X^4,X^5\}$, $\tilde{\mathcal{X}^3}=\{X^3\}$. It is obvious that we cannot avoid merging $X^2$ with $X^4$ and $X^5$. The corresponding decomposition $\mathcal{X}_0$ is given by the family $\{\varnothing,[-1,1/2], \{3/4\}\}$. Of course, in order to properly compute $\mathrm{Conf}(\mathcal{X}_0)$ we have first to discard the emptyset from the family.
\end{ex}

\begin{rem}\label{5.9}
It is pretty obvious that in order to study the limit behaviour of the conflict sets, we need first to define well the limit family of sets whose conflict set we will then have to determine. Since we allow the limit sets $X_0^i$ to overlap, we have to regroup them. Any regrouping will naturally affect the converging families in that they too will need to be regrouped in a new manner, while keeping track of what happens then with their conflict sets. At the same time we want to keep the setting we are in (the continuity and the common projection). This aim can be achieved, of course, in many ways. However, the main idea is that we would like to obtain the maximal possible number of sets in the regrouping of the limit sets $X_0^i$, i.e. we cannot avoid the merging of sets whose limits overlap. 
And that is precisely what is ensured by the construction described above: $k_0$ is the maximal cardinality possible and the regrouping $\tilde{\mathcal{X}}$ (i.e. each $\tilde{\mathcal{X}}_t$) is clearly a canonical one.
\end{rem}

Denote by $C_t$ the conflict set $\mathrm{Conf}(\mathcal{X}_t)$ and $\tilde{C}_t:=\mathrm{Conf}(\tilde{\mathcal{X}}_t)$, for $t\in \Pi$ and put $C_0=\mathrm{Conf}(\tilde{\mathcal{X}}_0)$ where $\tilde{\mathcal{X}}_0:= \mathcal{X}_0\setminus\{\varnothing\}$ (note that one of the sets $\tilde{X}_0^j$ may be empty) with the convention that $C_0=\varnothing$, when $\#\tilde{\mathcal{X}}_0\leq 1$. 


\begin{ex}
In Example \ref{przyklad niezwarty} we obtain $C_0=\{5/8\}$ and $\tilde{C}=\{(x,y)\mid x\in (0,1), y=1/(2x)+3/8\}\cup[(0,1)\times C_0]$.
\end{ex}

There is a nice relation between the conflict set of a family and the conflict set of a regrouping of this family.
\begin{lem}\label{zawieranie}
Let $\tilde{Y}=\{\tilde{Y}^1,\dots,\tilde{Y}^{k_0}\}$ be a regrouping of a family $\tilde{Y}=\{Y^1,\dots,Y^k\}$ of closed, nonempty subsets of $\M$. Then $\mathrm{Conf}(\tilde{\mathcal{Y}})\subset\mathrm{Conf}(\mathcal{Y})$.
\end{lem}
\begin{proof}
For $s\in\{1,\dots, k_0\}$, write $\tilde{Y}^s=\bigcup_{i=1}^{r_s}Y^{s_i}$, so that $k=r_1+\ldots+r_{k_0}$. Take $x\in \mathrm{Conf}(\tilde{\mathcal{Y}})$. By Remark \ref{suma} his is equivalent to say that there are indices $i\neq j$ such that
$$
\begin{cases}
d(x, \tilde{Y}^i)\leq \min_{s=1}^{k_0}d(x,\tilde{Y}^s),\\
d(x, \tilde{Y}^j)\leq \min_{s=1}^{k_0}d(x,\tilde{Y}^s)=\min_{s=1}^{k_0}\min_{i=1}^{r_s}d(x,{Y}^{s_i}).
\end{cases}
$$
Observe that $\min_{s=1}^{k_0}\min_{i=1}^{r_s}d(x,{Y}^{s_i})=\min_{i=1}^kd(x,Y^i)=:\varrho(x)$ and since $d(x,\tilde{Y}^j)=\min_{i=1}^{r_j}d(x,Y^{j_i})$, we conclude that there are indices $i_0\in\{i_1,\dots, i_{r_i}\}$ and $j_0\in \{j_1,\dots, j_{r_j}\}$ (necessarily distinct) satisfying 
$$
\begin{cases}
d(x, {Y}^{i_0})\leq d(x, \tilde{Y}^i)\leq \varrho(x),\\
d(x, {Y}^{j_0})\leq d(x, \tilde{Y}^j)\leq\varrho(x).
\end{cases}
$$
Therefore, $\mathrm{Conf}(\mathcal{Y})$ as required. 
\end{proof}

We are now ready to prove the main result of this section.
\begin{thm}\label{CS}
In the setting introduced above, there is $$C_0=\mathrm{Conf}(\tilde{\mathcal{X}}_0)\subset\liminf_{\tilde{\Pi}\ni t\to 0} \mathrm{Conf}({\mathcal{X}}_t)=\liminf_{\tilde{\Pi}\ni t\to 0} C_t.$$
Moreover, we have the convergence $$C_0=\lim_{\tilde{\Pi}\ni t\to 0}\mathrm{Conf}(\tilde{\mathcal{X}}_t)=\lim_{\tilde{\Pi}\ni t\to 0} \tilde{C}_t.$$
\end{thm}
\begin{proof}
The proof starts similarly to the medial axis case. We will be using the notation introduced so far. 

Suppose that there is a point $a\in C_0\setminus\liminf C_t$.  By Lemma \ref{zawieranie}, we have $a\in C_0\setminus\liminf \tilde{C}_t$. Thus, there is a ball $B$ centred at $a$ such that arbitrarily near 0 we can find $t\in \tilde{\Pi}$ such that $B\cap \tilde{C}_t=\varnothing$. By replacing the parameter space by a suitable sequence, we can assume that the intersection is empty for all $t\in \tilde{\Pi}$. In other words we can simply consider the situation where
$\tilde{\mathcal{X}}=\{\tilde{X}^1,\dots,\tilde{X}^{k_0}\}$ with $\tilde{X}^j_t\stackrel{K}{\rightarrow} \tilde{X}_0^j$, $\tilde{\Pi}\ni t\to 0$, and $B\cap\tilde{C}_t=\varnothing$, for $t\in \tilde{\Pi}$ and $j=1,\dots, k_0$. 
Of course, the limit sets are taken from the induced decomposition $\mathcal{X}_0$ as introduced above. By construction, for each $t\in\tilde{\Pi}$ there is a unique index $j(t)\in\{1,\dots,k_0\}$ such that $B\subset \mathbb{T}^{j(t)}_t$ where 
$$
\mathbb{T}^j=\{(t,p)\in \tilde{\Pi}\times\M \mid d(p,\tilde{X}_t^j)<\min_{i\neq j}d(p,\tilde{X}_t^i)\}.
$$
Indeed, if there were two points $x,y\in B$ belonging to two different open territories $\mathbb{T}^i_t$ and $\mathbb{T}_t^j$, respectively, then since these sets are disjoint and $B$ is connected, we would have 
$\varnothing\neq B\cap\overline{\mathbb{T}_t^i}\cap\overline{\mathbb{T}_t^j}\subset B\cap \tilde{C}_t$,contrary to the assumptions, due to Darboux theorem applied to any curve joining $x,y$ in $B$ and the function $p\mapsto\prod_{l\neq i} (d(p,X_i)-d(p,X_l))$. Since the number of possible indices is finite we can assume additionally that $j(t)=j_0$ for all $t\in \tilde{\Pi}$ by shrinking the parameter space again.

Now, we observe that for any $j\in\{1,\dots,k_0\}$, $$\mathbb{T}^{j}_t\stackrel{K}{\rightarrow}\{p\in\M\mid d(p,\tilde{X}_0^{j})\leq \min_{i=1}^{k_0}d(p,\tilde{X}_0^i)\}=:\overline{\mathbb{T}}^{j}_0\leqno{(\dag)}$$
where obviously $\overline{\mathbb{T}}^{j}_0$ reduces to $\varnothing$ when $\tilde{X}_0^j=\varnothing$ i.e. $d(x,\tilde{X}_0^{j})=+\infty$ (note that we necessarily have $k_0\geq 3$ so that there are at least two nonempty $\tilde{X}_0^i$, since $C_0\neq\varnothing$ by the assumptions.). 

Let us prove this convergence. Lemma \ref{ciqglosc} gives the assertion for the case $\overline{\mathbb{T}}^{j}_0=\varnothing$ and thus we may assume that we are dealing with $\tilde{X}_0^j\neq\varnothing$. If we put $\varrho(t,x)=\min_{i=1}^{k_0}d(x,\tilde{X}_t^i)$, then $\overline{\mathbb{T}^{j}_t}$ is described by the inequality $d(p,\tilde{X}^{j}_t)\leq \varrho(t,p)$ and thus $$\limsup\overline{\mathbb{T}^{j}_t}\subset \overline{\mathbb{T}}_0^{j},$$ cf. Remark \ref{granicecz}. 
On the other hand, if we pick a point $p\in \mathbb{T}^j_0$ then $d(p,\tilde X^j_0 ) < \varrho(0, p)$ and for some $\varepsilon > 0$, there is $$d(p,\tilde{X}^j_0)+\varepsilon<d(p,\tilde{X}^i_0)-\varepsilon,\,\forall i\neq j.$$ Thus for $t$ suitably close to $0$ $d(p,\tilde{X}^j_t)<d(p,\tilde{X}^i_t)$ thanks to Lemma \ref{ciqglosc} and, consequently, $p\in \mathbb{T}^j_t$.

Once we have established the convergence $(\dag)$, we are done. Indeed, on the one hand, $B\subset \mathbb{T}_t^{j_0}$ implies $B\subset\overline{\mathbb{T}}_0^{j_0}$. On other hand, since $a\in {C}_0$, then there must exist an index $i\neq j_0$ such that $a\in\overline{\mathbb{T}}_0^i$ and so $B\cap \overline{\mathbb{T}}_0^i\neq\varnothing$. But then the convergence $(\dag)$ implies that $B\cap \mathbb{T}_t^i\neq\varnothing$, for all $t$ sufficiently close to the origin, which is impossible in view of the fact that $B\subset \mathbb{T}_t^{j_0}$. This contradiction ends the proof of the first assertion. 

Now, for the `moreover' part, we already have $C_0\subset\liminf\tilde{C}_t$ by the first part of the proof and we need only to show that $\limsup\tilde{C}_t\subset C_0$. Thus, fix $a\in \limsup\tilde{C}_t$. There is $\{0\}\times \limsup\tilde{C}_t=\overline{\tilde{C}}\cap(\{0\}\times\M)$ (compare \cite{DD} Proposition 2.7) which means that $(0,p)\in\overline{\tilde{C}}\setminus\tilde{C}$, as $0\notin \pi(X^i)$. 
Therefore we can find points $t_\nu\in \Pi$ and $(t_\nu,a_\nu)\in \tilde{C}$, $\nu\in \mathbb{N}$ such that $(t_\nu,a_\nu)\to (0,a)$ as $\nu\to \infty$.
We keep the notation $\mathbb{T}^j$ introduced in the first part of the proof.

Suppose now that $a\notin C_0$. Then there is exactly one index $i\in\{1,\dots,k_0\}$, for which $a\in \mathbb{T}_0^i$ where of course
$$
\mathbb{T}_0^j=\{p\in \M\mid d(p,\tilde{X}_0^j)<\min_{i\neq j}d(p,\tilde{X}_0^i)\},
$$
cf. Remark \ref{suma}. There is a relatively compact neighbourhood $a\in U\subset\mathbb{T}_0^i$. Then by the convergence $(\dag)$ we have $U\cap \mathbb{T}_t^i\neq\varnothing$, for all $t$ close to 0. 
On the other hand, $a_\nu\in U \cap  \tilde{C}_{t_\nu}$ for large $\nu$, which
means that for any such $\nu$, we can find an index $j(\nu)\neq i$ such that
$U \cap \overline{\mathbb{T}^{j(\nu)}_t} \neq\varnothing$, cf. Remark \ref{suma}. As earlier, by extracting a subsequence, we can
choose $j(\nu) = j$ independent of $\nu$, for all $\nu$ large enough.
Then again, by $(\dag)$, the intersection $U\cap \overline{\mathbb{T}}_0^j$ is nonempty which contradicts $U\subset\mathbb{T}_0^i\subset\M\setminus\overline{\mathbb{T}}_0^j$, since $j\neq i$. Note that what plays an important role here is that the cardinality $k_0\geq 2$ (it cannot be one, since by assumptions, the upper limit is nonempty, whence $\tilde{C}_t\neq\varnothing$) is the same for each section, the limit one included.
\end{proof}
\begin{rem}
As a consequence of the Theorem above, we see that in the case when $\#\mathcal{X}_t=k$ is constant and in the limit none of the sets disappear nor merges with another, we have the convergence of the conflict sets $C_t\stackrel{K}{\longrightarrow}C_0$.
\end{rem}

\section{Appendix}

In this last section we prove two additional Propositions concerning the definable case which are of some interest on their own. Here \textit{definable} means \textit{definable in some o-minimal structure} --- we refer the reader to \cite{C}, but also \cite{DD2} in order to see how this is related to subanalytic sets. 

We shall need the following simple Lemma.
\begin{lem}\label{lem1}
 Let $I\subset {\Rz}^k\times{\Rz}$ be a definable set in the variables $(t,s)$. Then the function $r\colon {\Rz}^k\ni t\mapsto \sup I_t\in\overline{\Rz}={\Rz}\cup\{-\infty,+\infty\}$ is definable, too.
\end{lem}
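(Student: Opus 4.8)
The plan is to reduce the statement to the definability of graphs and the cell-decomposition (or, equivalently, the existence of definable Skolem functions / the monotonicity-type structure) available in any o-minimal structure. Concretely, consider the graph of the candidate function, namely
$$
\Gamma_r=\{(t,\sigma)\in{\Rz}\times\overline{\Rz}\mid \sigma=\sup I_t\}.
$$
Working inside ${\Rz}$ rather than $\overline{\Rz}$, it is cleaner to split according to the three possibilities for $\sup I_t$: the value $-\infty$ (which happens exactly when $I_t=\varnothing$), the value $+\infty$ (which happens exactly when $I_t$ is unbounded above), and a finite real value. The first two loci in $t$ are immediately definable: $\{t\mid I_t=\varnothing\}$ is the complement of the projection $\pi(I)$, hence definable, and $\{t\mid \sup I_t=+\infty\}=\{t\mid \forall N\,\exists s\,(s\in I_t\wedge s>N)\}$ is definable by quantifier elimination over the reals (i.e. definability is preserved under first-order operations). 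So the only real content is to show that on the remaining definable set $T:=\{t\mid I_t\neq\varnothing,\ I_t\text{ bounded above}\}$ the finite-valued function $t\mapsto\sup I_t$ has definable graph.

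For $t\in T$ I would simply write the supremum by its first-order defining property: $\sigma=\sup I_t$ if and only if
$$
(\forall s\in I_t:\ s\le\sigma)\ \wedge\ (\forall\varepsilon>0\,\exists s\in I_t:\ s>\sigma-\varepsilon).
$$
Both clauses are first-order formulas in the language of the o-minimal structure, using only the (definable) set $I$ and the ordering; hence the set $\{(t,\sigma)\in T\times{\Rz}\mid \sigma=\sup I_t\}$ is definable. Patching this with the two definable pieces $\{t\mid \sup I_t=-\infty\}$ and $\{t\mid \sup I_t=+\infty\}$ (formally one embeds $\overline{\Rz}$ as, say, a bounded interval via a definable homeomorphism, or one just treats ``definable $\overline{\Rz}$-valued function'' as meaning its three constituent pieces are definable) shows that $r$ is definable.

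There is really no serious obstacle here: the whole argument is an instance of the fact that o-minimal (indeed, just first-order) definability is closed under the logical operations $\neg,\wedge,\vee,\exists,\forall$ and under taking sections of a definable set, together with the trivial observation that ``being the supremum'' is expressible by such a formula. If one prefers an argument avoiding quantifier-elimination bookkeeping, one can instead invoke o-minimal cell decomposition: decompose $I$ into finitely many cells over the $t$-axis; over each cell in the base the fibre $I_t$ is a finite union of points and open intervals whose endpoints are given by continuous definable functions of $t$, so $\sup I_t$ is, piecewise in $t$, the maximum of finitely many such definable functions (or $\pm\infty$), and the maximum of finitely many definable functions is definable. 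Either way the conclusion follows, and the only thing to be a little careful about is the formal treatment of the values $\pm\infty$, which is exactly why I would isolate those two loci at the outset.
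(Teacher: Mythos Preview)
Your proof is correct and essentially identical to the paper's own argument: the paper also splits off the locus $\{t\mid r(t)=-\infty\}={\Rz}\setminus\pi(I)$, writes $\{t\mid r(t)=+\infty\}$ via the formula $\forall R>0\,\exists s\in I_t\colon s\geq R$, and on the remainder characterizes $r(t)=R$ by the same first-order supremum condition $(\forall s\in I_t,\, s\leq R)\wedge(\forall\varepsilon>0\,\exists s\in I_t\colon R-\varepsilon<s)$. Your additional remark via cell decomposition is a valid alternative not present in the paper, but the main line of argument coincides.
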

\begin{proof}
The level set $\{t\mid r(t)=-\infty\}$ coincides with ${\Rz}\setminus \pi(I)$ where $\pi(t,s)=t$.  We prove the definability of $r$ over $\pi(I)$.

For $R<+\infty$,
$$
r(t)=R\ \Leftrightarrow\ (\forall s\in I_t, s\leq R)\ \textrm{and}\ (\forall \varepsilon>0, \exists s\in I_t\colon R-\varepsilon<s)
$$
and
$$
r(t)=+\infty\ \Leftrightarrow\ \forall R>0, \exists s\in I_t\colon s\geq R.
$$
The latter shows that the level set $\{t\mid r(t)=+\infty\}$ is definable and the former proves that over its complement $r(t)$ is a definable function, which accounts for the definability of $r(t)$.
\end{proof}
We shall now consider the situation from the proof of Theorem \ref{main} together with the notation introduced there:
\begin{prop}
Assume that $X\subset{\Rz}^k_t\times{\Rz}_x^n$ is definable and has closed $t$-sections for $t\in\pi(X)=:T$ where $\pi(t,x)=t$. Let $m_t(x)$ denote the set of closest points to $x$ in $X_t$. Fix $a\in {\Rz}^n$ and assume that in a neighbourhood $U\ni a$ all the $m_t$'s are univalent for $t\in T$. Put $d_t(x):=\mathrm{dist}(x,X_t)$ and define
$$
r(t):=\sup\{s\geq 1\mid \mathbb{B}(m_t(a)+s(a-m_t(a)),s\cdot d_t(a))\cap X_t=\varnothing\},\> t\in T.
$$
Then the function $r\colon T\to [1,+\infty]$ is definable.
\end{prop}
\begin{proof}
It follows from the description and the definability of the family $X_t$. Indeed,  consider the complement of the set (for a fixed $t\in T$)
$$\{s\geq 1\mid \mathbb{B}(m_t(a)+s(a-m_t(a)),sd_t(a))\cap X_t=\varnothing\}$$
--- as it is described by the condition $$\exists x\in \mathbb{B}(m_t(a)+s(a-m_t(a)),sd_t(a))\cap X_t,$$ it can be written as the image 
$$
I_t:=p(\{(x,s)\in{\Rz}^n\times[1,+\infty)\mid x\in \mathbb{B}(m_t(a)+s(a-m_t(a)),sd_t(a))\cap X_t\})
$$
under the projection $p(x,s)=s$. Now, let us introduce the sets $Y=X\times [1,+\infty)$ and 
$$
B=\{(t,x,s)\in T\times{\Rz}^n\times [1,+\infty)\mid ||x-m_t(a)-s(a-m_t(a))||<sd_t(a)\}.
$$
Both are definable subsets of ${\Rz}^k\times{\Rz}^n\times {\Rz}$: in the case of $Y$ it is obvious and for $B$ it follows from the definability of the maps associating to $t$ the number $d_t(a)$ and the vector $m_t(a)$, respectively (cf. \cite{D}). It remains to observe that 
$$
I_t=p((B\cap Y)_t)
$$
where $(B\cap Y)_t=B_t\cap Y_t$ denotes the $t$-section. Finally, consider the projection $\varrho(t,x,s)=(t,s)$. Then 
$$
p((B\cap Y)_t)=(\varrho(B\cap Y))_t
$$
which shows that the set
$$
I:=\{(t,s)\in T\times [1,+\infty)\mid s\in I_t\}=\varrho(B\cap Y)
$$
is definable and $I_t$ is its $t$-section. 

It remains to use Lemma \ref{lem1} to conclude that $r(t)$ is definable.
\end{proof}

The next Proposition completes the previous Remark \ref{5.9}. Namely, we keep the Setting \ref{konflikt} but now for a definable family of sets $\mathcal{X}_t$ with a multidimensional parameter $t\in T$, i.e. we consider $p\geq 2$ closed, definable, pairwise disjoint sets $X^1,\dots,X^p\subset{\Rz}^k_t\times{\Rz}^n_x$ and $T$ is their common projection with $0\in\overline{T}\setminus T$. Then for the disjoint union $X:=\bigcup_{i=1}^p X^i$, $X_t\stackrel{K}{\rightarrow} \bigcup_{i=1}^p X_0^i=:X_0$ and $X_0$ is definable, since each of the limits $X_0^i$ is such (cf. \cite{DD} Theorem 2.5). Next, $\tilde{X}:=X\cup (\{0\}\times X_0)$ is a definable set with $\tilde{X}_t\stackrel{K}{\rightarrow} \tilde{X}_0$. 

Now, after the regrouping discussed in Remark \ref{5.9}, keeping the notation $C_t$ for the conflict set $\mathrm{Conf}(\mathcal{X}_t)$ and $\tilde{C}_t:=\mathrm{Conf}(\tilde{\mathcal{X}}_t)$, for $t\in T$, we put $C_0=\mathrm{Conf}(\tilde{\mathcal{X}}_0)$ where $\tilde{\mathcal{X}}_0:= \mathcal{X}_0\setminus\{\varnothing\}$ just as earlier, with the convention that $C_0=\varnothing$, when $\#\tilde{\mathcal{X}}_0\leq 1$. Observe that all these sets are definable.

\begin{prop}\label{dfnsc}
The sets $C:=\bigcup_{t\in T}\{t\}\times C_t$ and $\tilde{C}=\bigcup_{t\in T}\{t\}\times\tilde{C}_t$ are definable.
\end{prop}
\begin{proof}
The function $\varrho(t,x)=\min_{i=1}^p d(x,X_t^i)$ is definable which implies the definability of the sets 
$$
\overline{\mathbb{T}}^i=\{(t,x)\in T\times{\Rz}^n\mid d(x, X_t^i)\leq \varrho(t,x)\},\quad i=1,\dots,p
$$
and therefore the definability of the set $\bigcup_{1\leq i<j\leq p}(\overline{\mathbb{T}}^i\cap\overline{\mathbb{T}}^j)$. The latter coincides with $C$ since its $t$-sections are precisely the sets $C_t$ (\footnote{Note that $(A*B)_t=A_t* B_t$, for $*\in\{\cup,\cap\}$.}), cf. Remark \ref{suma}. Of course the same argument works for $\tilde{C}$.
\end{proof}

\section{Statements and declarations}
On behalf of all authors, the corresponding author states that there is no conflict of interest and there are no datasets associated to the study.

\end{document}